\definecolor{verylight}{gray}{0.97}
\definecolor{light}{gray}{0.9}
\definecolor{medium}{gray}{0.85}
\definecolor{dark}{gray}{0.6}
\def\frk{\frak}               
\def\Phi{{\frk n}}
\def\Phi{{\frk N}}
\def\opn#1#2{\def#1{\operatorname{#2}}} 
\opn\chara{char} \opn\length{\ell} \opn\pd{pd} \opn\rk{rk}
\opn\projdim{proj\,dim} \opn\injdim{inj\,dim} \opn\rank{rank}
\opn\depth{depth} \opn\grade{grade} \opn\height{height}
\opn\embdim{emb\,dim} \opn\codim{codim}
\opn\Tr{Tr} \opn\bigrank{big\,rank}
\opn\superheight{superheight}\opn\lcm{lcm}
\opn\trdeg{tr\,deg}
\opn\reg{reg} \opn\lreg{lreg} \opn\ini{in} \opn\lpd{lpd}
\opn\size{size}\opn\bigsize{bigsize}
\opn\cosize{cosize}\opn\bigcosize{bigcosize}
\opn\sdepth{sdepth}\opn\sreg{sreg}
\opn\link{link}\opn\fdepth{fdepth}
\opn\deg{deg}
\opn\max{max}
\opn\indeg{indeg}
\opn\min{min}
\opn\psln{psln}
\opn\div{div} \opn\Div{Div} \opn\cl{cl} \opn\Cl{Cl}
\let\epsilon\varepsilon
\let\phi=\varphi
\let\kappa=\varkappa
\opn\Spec{Spec} \opn\Supp{Supp} \opn\supp{supp} \opn\Sing{Sing}
\opn\Ass{Ass} \opn\Min{Min}\opn\Mon{Mon} \opn\dstab{dstab} \opn\astab{astab}
\opn\Syz{Syz}
\opn\Ann{Ann} \opn\Rad{Rad} \opn\Soc{Soc}
\opn\Im{Im} \opn\Ker{Ker} \opn\Coker{Coker} \opn\Am{Am}
\opn\Hom{Hom} \opn\Tor{Tor} \opn\Ext{Ext} \opn\End{End}
\opn\Aut{Aut} \opn\id{id}
\opn\nat{nat}
\opn\pff{pf}
\opn\Pf{Pf} \opn\GL{GL} \opn\SL{SL} \opn\mod{mod} \opn\ord{ord}
\opn\Gin{Gin} \opn\Hilb{Hilb}\opn\sort{sort}
\opn\initial{init}
\opn\ende{end}
\opn\height{height}
\opn\depth{depth}
\opn\type{type}
\opn\ldim{ldim}
\opn\aff{aff} \opn\con{conv} \opn\relint{relint} \opn\st{st}
\opn\lk{lk} \opn\cn{cn} \opn\core{core} \opn\vol{vol}
\opn\link{link} \opn\star{star}\opn\lex{lex}
\opn\gr{gr}
\def\pot#1#2{#1[\kern-0.28ex[#2]\kern-0.28ex]}
\opn\dirlim{\underrightarrow{\lim}}
\opn\inivlim{\underleftarrow{\lim}}
\let\to=\rightarrow
\def\Implies{\ifmmode\Longrightarrow \else
        \unskip${}\Longrightarrow{}$\ignorespaces\fi}
\def\implies{\ifmmode\Rightarrow \else
        \unskip${}\Rightarrow{}$\ignorespaces\fi}
\def\iff{\ifmmode\Longleftrightarrow \else
        \unskip${}\Longleftrightarrow{}$\ignorespaces\fi}
 \theoremstyle{plain}
\newtheorem{Theorem}{Theorem}[section]
 \newtheorem{Lemma}[Theorem]{Lemma}
 \newtheorem{Proposition}[Theorem]{Proposition}
 \theoremstyle{definition}
 \newtheorem{Example}[Theorem]{Example}
\let\epsilon\varepsilon
\let\kappa=\varkappa
\def\qed{\ifhmode\textqed\fi
      \ifmmode\ifinner\quad\qedsymbol\else\dispqed\fi\fi}
\def\textqed{\unskip\nobreak\penalty50
       \hskip2em\hbox{}\nobreak\hfil\qedsymbol
       \parfillskip=0pt \finalhyphendemerits=0}
\def\dispqed{\rlap{\qquad\qedsymbol}}
\opn\dis{dis}
\def\pnt{{\raise0.5mm\hbox{\large\bf.}}}
\opn\Lex{Lex}
\begin{document}

\author[Mafi and Naderi]{ Amir Mafi and Dler Naderi}
\title{Results on the Hilbert coefficients and reduction numbers}

\address{Amir Mafi, Department of Mathematics, University Of Kurdistan, P.O. Box: 416, Sanandaj, Iran.}
\email{A\_Mafi@ipm.ir}
\address{Dler Naderi, Department of Mathematics, University of Kurdistan, P.O. Box: 416, Sanandaj,
Iran.}
\email{dler.naderi65@gmail.com}

\begin{abstract}
Let $(R,\frak{m})$ be a $d$-dimensional Cohen-Macaulay local ring, $I$ an $\frak{m}$-primary ideal and $J$ a minimal reduction of $I$.
In this paper we study the independence of reduction ideals and the behavior of the higher Hilbert coefficients. In addition, we give some examples in this regards.

\end{abstract}

\subjclass[2010]{13A30, 13D40, 13H10}
\keywords{Hilbert coefficient, Minimal reduction, Associated graded ring}

\maketitle

\section*{Introduction}
Throughout this paper, we assume that $(R,\frak{m})$ is a Cohen-Macaulay local ring of dimension $d$ and the residue class field $R/{\frak{m}}$ is infinite.
For an $R$-module $M$, let $\lambda(M)$ denote the length of $M$. Let $I$ be an $\frak{m}$-primary ideal of $R$. The \emph{Hilbert-Samuel function} $H_I(n)$ of $I$ is defined as $H_I(n)=\lambda(R/I^n)$. There exists a polynomial $P_I(x)$ of the form $$P_I(x)=e_0{{x+d-1}\choose{d}}-e_1{{x+d-2}\choose{d-1}}+...+(-1)^de_d$$
such that $P_I(n)=H_I(n)$ for all large $n$, where $e_i=e_i(I)\in\mathbb{Z}$ are called the \emph{Hilbert coefficients} of $R$ with respect to $I$.

An ideal $J\subseteq I$ is called a reduction ideal of $I$ if $I^{r+1}=JI^r$ for some nonnegative integer $r$ (see \cite{NR}). The least such $r$ is called the reduction number of $I$ with respect to $J$ and denoted by $r_J(I)$. A reduction ideal $J$ is called a minimal reduction if it does not properly contain a reduction ideal of $I$, under our assumption it is generated by a regular sequence. The reduction number of $I$ is defined as

$r(I)=\min\{r_J(I): J$ is a minimal reduction ideal of $I\}$. The reduction number $r(I)$ is said to be independent if $r(I)=r_J(I)$ for all minimal reductions $J$ of $I$.
Sally in \cite{S1} raised the following question: If $(R,\frak{m})$ is a $d$-dimensional Cohen-Macaulay local ring having an infinite residue field, then is $r(\frak{m})$
independent? A natural extension of this question is to replace $r(\frak{m})$ with $r(I)$. Let $G(I)=\bigoplus_{n\geq 0}I^n/I^{n+1}$ be the associated graded ring of $I$. Huckaba in \cite{H} and Trung in \cite{T} independently proved that if $\depth G(I)\geq{d-1}$, then $r(I)$ is independent (see also \cite{M}, \cite{HO}, \cite{HO1}, \cite{ST}, \cite{MN} and \cite{MNO}). Moreover, Wu in \cite{Wu} proved that if $\depth G(I)\geq{d-2}$ and $r(I)\geq n(I) +d-1$; where $n(I)$ is the postulation number of $I$, then $r(I)$ is independent.
However if $d\geq 2$ and $\depth G(I)\leq{d-2}$, then $r(I)$ is not independent in general. Counter-examples have been obtained in \cite{H}, \cite{M} and \cite{MNO}.

It is known that $e_0, e_1$ and $e_2$ are nonnegative integers. Unfortunately, the good behavior of the Hilbert coefficients stops with $e_2$.
Indeed, Narita in \cite{N} showed that it is possible for $e_3$ to be negative (see also \cite{CPR}). However, Itoh in \cite{I1} proved that if $I$ is normal ideal; that is $\overline{I^n}=I^n$ for all positive integer $n$, then $e_3$ is a nonnegative integer (see also \cite{CPR}). Puthenpurakal in \cite{TON} obtained remarkable results about negativity of $e_3$.

The main purpose of this paper is to study the independence of reduction number and also the behavior of the higher Hilbert coefficients. In the last section we collect some examples which disprove a question one can make about the behavior of Hilbert coefficients.

\section{Main results }

We begin this section by recalling some known definitions, notations and results in \cite{HM} and \cite{H1}.
An element $x \in I\setminus I^2$ is said to be superficial for $I$ if there is an integer $c$ such that $(I^{n+1} :x)\cap I^{c} = I^{n}$ for all $n \geq c$. If $\grade (I)\geq 1$ and $x$ is a superficial element, then $x$ is a regular element of $R$ and so by Artin-Rees Theorem  $I^{n+1} :x = I^{n}$ for all $n \gg 0$. If $R/{\frak{m}}$ is infinite, then a superficial element for $I$ always exists.
A sequence $x_1, ..., x_s$ is called a superficial sequence for $I$ if $x_1$ is superficial for $I$ and $x_i$ is superficial for $I/(x_1, ..., x_{i-1})$ for $2 \leq i\leq s$. If $I$ is an $\frak{m}$-primary ideal and $J$ is a minimal reduction of $I$, then there is a superficial sequence $x_1, ..., x_d$ in $I$ such that $J=(x_1, ..., x_d)$. For any element $x\in I$ we let $x^{*}$ denote the image of $x$ in $I/I^2$. We note that if $x^{*}$ is a regular element of $G(I)$, then $x$ is a regular element of $R$ and $G(I/(x))= G(I)/(x^{*})$.

Huckaba and Marley in \cite{HM} constructed the complex $C_.(x_1, ... ,x_d, n)$ which has the following form
\[0 \to {R \mathord{\left/
 {\vphantom {R {{I^{n - d}}}}} \right.
 \kern-\nulldelimiterspace} {{I^{n - d}}}} \to {({R \mathord{\left/
 {\vphantom {R {{I^{n - d + 1}}}}} \right.
 \kern-\nulldelimiterspace} {{I^{n - d + 1}}}})^d} \to {({R \mathord{\left/
 {\vphantom {R {{I^{n - d + 2}}}}} \right.
 \kern-\nulldelimiterspace} {{I^{n - d + 2}}}})^{\left( {\begin{array}{*{20}{c}}
d\\
2
\end{array}} \right)}} \to \quad\cdots\quad  \to {R \mathord{\left/
 {\vphantom {R {{I^n}}}} \right.
 \kern-\nulldelimiterspace} {{I^n}}} \to 0.\]
 Let $C_.(n) = C_.(x_1, x_2, ..., x_d, n)$ and $C_{.}^{'}(n)= C_.(x_1, x_2, ..., x_{d-1}, n)$. For any $n$ there is an exact sequence of complexes
$$
0 \longrightarrow {C_.}^\prime (n) \longrightarrow {C_.}(n) \longrightarrow {C_.}^\prime (n - 1)[ - 1] \longrightarrow 0,
$$
where ${C_.}^\prime (n - 1)[ - 1]$ is the complex ${C_.}^\prime (n - 1)$ shifted to the left by one degree.
Thus, we have the corresponding long exact sequence of homology modules:
$$
\cdots\longrightarrow {H_i}({C_.}^\prime (n)) \longrightarrow {H_i}({C_.}(n)) \longrightarrow {H_{i - 1}}({C_.}^\prime (n - 1)){\kern 1pt} \; \stackrel{x_d}{\longrightarrow} {H_{i - 1}}({C_.}^\prime (n)) \longrightarrow\cdots.
$$
For  $i \ge 1$, we define
$$
{h_i}: = \sum\limits_{n = 1}^\infty  {\lambda ({H_i}({C_.}(n)))}
$$
and
 $$
 {k_i}: = \sum\limits_{n = 2}^\infty  {(n - 1)\lambda ({H_i}({C_.}(n)))}.
 $$
By \cite[\S 4]{HM}, we have\\

$$
{\Delta^d}[{P_I}(n)-{H_I}(n)]=\lambda(I^n/{I^n\cap J})-\sum\limits_{i = 1}^d {{{( - 1)}^i}\lambda ({H_i}({C_.}(n))}$$ $$=\lambda({{{I^n}}/{{J{I^{n - 1}}}}}) - \sum\limits_{i = 2}^d {{{( - 1)}^i}\lambda ({H_i}({C_.}(n))}
$$
and
$$
{e_i}(I) = \sum\limits_{n = i}^\infty  {\left( {{}_{i - 1}^{n - 1}} \right)} \;{\Delta ^d}[{P_I}(n) - {H_I}(n)].
$$
Hence by combining two previous formulas, we have
\[{e_1}(I)=\sum\limits_{n = 1}^\infty {\lambda ({{{I^n}}}/{{J{I^{n -1}}}})}-\sum\limits_{i = 2}^d {{{( - 1)}^i} {h_i}}=\sum\limits_{n = 1}^\infty{\lambda(I^n/{I^n\cap J})}+\sum\limits_{i = 1}^d {{{( - 1)}^{i-1} {h_i}}} \]
and
\[{e_2}(I)=\sum\limits_{n = 2}^\infty  {(n - 1)\lambda ({{{I^n}}}/{{J{I^{n - 1}}}})}  - \sum\limits_{i = 2}^d {{{( - 1)}^i}{k_i}}. \]
\\

For an ideal $I$ of $R$, let $\overline{I}$ denote the integral closure of $I$ in $R$. That is, $\overline{I}$ is the set of all elements $x$ in $R$ satisfying the equation of the form $x^k+a_1x^{k-1}+...+a_k=0$, where $a_i\in I^i$ for $i=1,2,...,k$. The ideal $I$ is integrally closed when $\overline{I}=I$. Also, the ideal $I$ is said to be asymptotically normal if there exists an integer $n_0\geq 1$ such that $I^n$ is integrally closed for all $n\geq n_0$, for interesting family of asymptotically normal ideals, see \cite[Remark 4.3]{CPR}.

\begin{Proposition} \label{l1}
Let $I$ be an $\frak{m}$-primary integrally closed ideal and $J$ be a minimal reduction of $I$. If $e_2=\lambda ({I^2}/{JI}) +1$, then $G(I)$ is Cohen-Macaulay, $e_2={e_1} - {e_0} + \lambda ({R}/{I})$, $r_J(I) \leq 3$ and $r(I)$ is independent for any minimal reduction $J$ of $I$.
\end{Proposition}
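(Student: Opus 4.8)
The plan is to exploit the two formulas for $e_1$ and $e_2$ displayed above, together with the general inequalities relating the Hilbert coefficients to the ``first difference'' lengths $\lambda(I^n/JI^{n-1})$. First I would recall the chain of inequalities that is standard in this circle of ideas (due to Huckaba--Marley and refined by others): one always has
\[
e_2(I)\;\ge\;\sum_{n\ge 2}(n-1)\lambda(I^n/JI^{n-1})\;\ge\;\lambda(I^2/JI),
\]
the first inequality because the correction terms $\sum_{i\ge 2}(-1)^i k_i$ contribute nonnegatively when $I$ is integrally closed (so that $H_1(C_\bullet(n))$ vanishes in the relevant range and the alternating sum of the $H_i$'s, $i\ge 2$, has the right sign), and the second because already the $n=2$ term of $\sum_{n\ge2}(n-1)\lambda(I^n/JI^{n-1})$ equals $\lambda(I^2/JI)$. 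The hypothesis $e_2=\lambda(I^2/JI)+1$ therefore forces both inequalities to be as tight as possible: the ``defect'' is exactly $1$, so $\lambda(I^n/JI^{n-1})=0$ for all $n\ge 3$ and the homology correction term contributes exactly $1$.

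From $\lambda(I^n/JI^{n-1})=0$ for $n\ge 3$ we get $I^3=JI^2$, i.e. $r_J(I)\le 3$; and then I would feed this back into the $e_1$-formula
\[
e_1(I)=\sum_{n\ge1}\lambda(I^n/JI^{n-1})-\sum_{i\ge2}(-1)^i h_i
      =\lambda(I/J)+\lambda(I^2/JI)-\sum_{i\ge2}(-1)^i h_i .
\]
The key point is that when $I$ is integrally closed the quantities $h_i$ (for $i\ge 2$) are controlled by the $k_i$: since $k_i=\sum_{n\ge2}(n-1)\lambda(H_i(C_\bullet(n)))\ge \sum_{n\ge 2}\lambda(H_i(C_\bullet(n)))\ge 0$ and the surviving homology is concentrated in degrees where the ``weight'' $n-1$ equals $1$, the condition that $\sum_{i\ge2}(-1)^i k_i=1$ pins down $\sum_{i\ge2}(-1)^i h_i$ as well, and in fact forces all the higher homology $H_i(C_\bullet(n))$ with $i\ge 2$ to vanish except for a single one-dimensional contribution in degree $i=2$, $n=2$. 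By the Huckaba--Marley criterion (\cite{HM}), vanishing of $H_i(C_\bullet(n))$ for all $i\ge 1$ is equivalent to $\depth G(I)\ge d$, i.e.\ $G(I)$ Cohen--Macaulay; here I must instead argue directly that this single surviving homology class is still compatible with $G(I)$ being Cohen--Macaulay, using that $H_1$ vanishes throughout (integral closedness) and that $r_J(I)\le 3$ with the Sally-type bound $\lambda(I^2/JI)\le e_1-e_0+\lambda(R/I)$ becoming equality. This equality $e_2=e_1-e_0+\lambda(R/I)$ is exactly the ``Sally-machine'' borderline case that is known (Rossi--Valla, Corso--Polini--Vasconcelos) to characterize Cohen--Macaulayness of $G(I)$ together with $r_J(I)\le 3$ for integrally closed $I$, so the cleanest route is: first derive $e_2=e_1-e_0+\lambda(R/I)$ from the two formulas above plus $I^3=JI^2$, then invoke that characterization to conclude $G(I)$ is Cohen--Macaulay.

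Once $\depth G(I)\ge d \ge d-1$ is established, independence of $r(I)$ is immediate from the Huckaba--Trung theorem cited in the introduction: $\depth G(I)\ge d-1$ already gives $r_J(I)=r(I)$ for every minimal reduction $J$. So the logical skeleton is: (1) the inequality $e_2\ge\sum_{n\ge2}(n-1)\lambda(I^n/JI^{n-1})\ge\lambda(I^2/JI)$; (2) equality forced $\Rightarrow$ $I^3=JI^2$ hence $r_J(I)\le 3$, plus the homology defect is exactly $1$; (3) plug into the $e_1$- and $e_2$-formulas to obtain $e_2=e_1-e_0+\lambda(R/I)$; (4) apply the known borderline characterization to get $G(I)$ Cohen--Macaulay; (5) apply Huckaba/Trung to get independence. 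I expect step (3)--(4) to be the main obstacle: one has to be careful that the ``extra $1$'' in $e_2$ really is accounted for entirely by a degree-$2$ homology module and does not leak into making $\sum_i(-1)^i h_i$ behave badly, and one needs the precise form of the Sally-type equality characterization for integrally closed ideals rather than the general (merely asymptotically normal) statement.
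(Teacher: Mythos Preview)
Your skeleton converges to the right endpoint (the borderline equality $e_2=e_1-e_0+\lambda(R/I)$ followed by the Ozeki--Rossi/Sally-type characterization), but the route you take to reach it in steps (1)--(2) does not work as stated.

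The inequality you start from,
\[
e_2(I)\;\ge\;\sum_{n\ge 2}(n-1)\,\lambda(I^n/JI^{n-1}),
\]
is not available. From the displayed formula $e_2=\sum_{n\ge 2}(n-1)\lambda(I^n/JI^{n-1})-\sum_{i\ge 2}(-1)^i k_i$ one would need $\sum_{i\ge 2}(-1)^i k_i\le 0$, and integral closedness of $I$ does not give this: the vanishing it buys is $I^2\cap J=JI$ (control of $H_1$ in degree $2$), which says nothing about the sign of the alternating sum of the $k_i$ for $i\ge 2$. In dimension $2$ the Huckaba--Marley inequality in fact goes the \emph{other} way, $e_2\le\sum_{n\ge 2}(n-1)\lambda(I^n/JI^{n-1})$, so your chain collapses already there. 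Consequently you cannot conclude $\lambda(I^n/JI^{n-1})=0$ for $n\ge 3$ (and note that this would give $r_J(I)\le 2$, not $\le 3$; the proposition only claims $I^4=JI^3$).

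The paper bypasses this entirely. It uses Itoh's lower bound $e_1-e_0+\lambda(R/I)\le e_2=\lambda(I^2/JI)+1$ and rules out the strict case $e_1-e_0+\lambda(R/I)\le\lambda(I^2/JI)$ by contradiction: writing $e_1-e_0+\lambda(R/I)=\lambda(I^2/JI)+\sum_{n\ge 3}\lambda(I^n/I^n\cap J)+\sum_{i\ge 1}(-1)^{i-1}h_i$ and invoking the nonnegativity of $\sum_{i\ge 1}(-1)^{i-1}h_i$ (Huckaba--Marley, with equality forcing $G(I)$ Cohen--Macaulay), the strict case forces both tails to vanish, hence $G(I)$ is Cohen--Macaulay and then the $e_2$-formula yields $e_2=\lambda(I^2/JI)$, contradicting the hypothesis. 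Thus $e_2=e_1-e_0+\lambda(R/I)$, and \cite[Theorem~3.4]{OR} gives $G(I)$ Cohen--Macaulay and $I^4=JI^3$; independence of $r(I)$ then follows from $\depth G(I)\ge d-1$. Your step (4) is the right citation, but you need the paper's argument---or an equivalent one based on the $e_1$-formula and the $h_i$, not the $e_2$-formula and the $k_i$---to reach the borderline equality first.
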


\begin{proof}
By \cite[Theorem 12]{I} we have ${e_1}-{e_0}+\lambda({R}/{I})\leq e_2=\lambda ({I^2}/{JI}) +1$. Since $I$ is integrally closed, we have $I^2\cap J=IJ$. Suppose ${e_1} - {e_0} + \lambda ({R}/{I})\leq\lambda ({I^2}/{JI})$. Thus $\lambda ({I^2}/{JI})+\sum\limits_{n = 3}^d {\lambda ({{{I^n}}}/{{{I^n} \cap J}})} +\sum\limits_{i = 1}^d {{{( - 1)}^{i - 1}}{h_i}} \leq\lambda ({I^2}/{JI})$ and so by \cite[Theorem 3.7]{HM} we have $\sum\limits_{n = 3}^d {\lambda ({{{I^n}}}/{{{I^n} \cap J}})}=0$ and $\sum\limits_{i = 1}^d {{{( - 1)}^{i - 1}}{h_i}} =0$.  Therefore $G(I)$ is Cohen-Macaulay and by \cite[Lemma 3.2]{MNO} $\sum\limits_{i = 1}^d {{{( - 1)}^{i - 1}}{k_i}}=0 $. Since
$\sum\limits_{n = 3}^d {(n - 1)\lambda ({{{I^n}}}/{{{I^n} \cap J}})}=0 $, we have $e_2=\lambda ({I^2}/{JI}) $ which is contradiction by our hypothesis. Therefore ${e_1} - {e_0} + \lambda ({R}/{I})=\lambda ({I^2}/{JI})+1 $ and by \cite[Theorem 3.4]{OR} $I^4=JI^3$  and so $r(I)$ is independent for any minimal reduction $J$ of $I$.
\end{proof}

Corso, Polini and Rossi in \cite[Remark 3.7]{CPR} observed that if $I$ is integrally closed ideal and $e_2=0,1,2$, then $G(I)$ is Cohen-Macaulay (see also \cite{E}, \cite{ERV}, \cite{EV} and \cite{I}). Also they observed that assumption on the ideal $I$ being integrally closed cannot be weakened, see \cite[Example 3.8]{CPR}.
In the following proposition we prove that if $I$ is an $\frak{m}$-primary integrally closed ideal and $e_2=3$, then $\depth G(I)\geq d-2$ and $r(I)$ is independent.

\begin{Proposition} \label{P1}
Let $I$ be an $\frak{m}$-primary integrally closed ideal and $J$ be a minimal reduction of $I$. If ${e_2}(I)=3$,
then $\depth G(I)\geq d-2$ and  $r(I) $ is independent.
\end{Proposition}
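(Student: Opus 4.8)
The plan is to mimic the strategy of Proposition \ref{l1}, pushing the numerical analysis one step further. The governing inequalities come from Itoh's bound \cite[Theorem 12]{I}, namely $e_1 - e_0 + \lambda(R/I) \leq e_2$, together with the Huckaba--Marley formula
\[
e_1 - e_0 + \lambda(R/I) = \sum_{n=2}^{d}\lambda(I^n/(I^n\cap J)) + \sum_{i=1}^{d}(-1)^{i-1}h_i .
\]
Since $I$ is integrally closed, $I^2\cap J = IJ$, so the $n=2$ term equals $\lambda(I^2/JI)$. Combining with $e_2 = 3$ gives
\[
\lambda(I^2/JI) + \sum_{n=3}^{d}\lambda(I^n/(I^n\cap J)) + \sum_{i=1}^{d}(-1)^{i-1}h_i \leq 3 .
\]
First I would invoke the result of Corso--Polini--Rossi (recalled just before the statement): if $e_2 \leq 2$ we already know $G(I)$ is Cohen--Macaulay and independence follows, so I may assume $\lambda(I^2/JI)+1 \geq 3$ is not forced, i.e.\ I would split into cases according to the value of $\lambda(I^2/JI)$, which by \cite[Theorem 12]{I}-type bounds is at most $e_2 = 3$.

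The heart of the argument is a case analysis on $\lambda(I^2/JI)\in\{1,2,3\}$ (the case $\lambda(I^2/JI)=0$ would force $I^2=JI$, hence $G(I)$ Cohen--Macaulay with $e_2\leq 1$, excluded). In the extremal case $\lambda(I^2/JI)=3$: the displayed inequality forces $\sum_{n\geq 3}\lambda(I^n/(I^n\cap J)) = 0$ and $\sum_i(-1)^{i-1}h_i = 0$; then \cite[Theorem 3.7]{HM} yields $G(I)$ Cohen--Macaulay, and by \cite[Lemma 3.2]{MNO} also $\sum_i(-1)^{i-1}k_i = 0$, so the $e_2$-formula gives $e_2 = \lambda(I^2/JI) = 3$ consistently; independence then follows from an Ooishi/Rossi-type criterion \cite[Theorem 3.4]{OR} bounding the reduction number. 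In the case $\lambda(I^2/JI) = 3$ where instead the $h_i$'s are allowed to be nonzero but sum to zero — here I would argue $h_1$ must vanish (using $h_1 = \sum_n \lambda(H_1(C_\bullet(n)))$ and that $H_1$ measures the failure of the Valabrega--Valla condition in degree $\leq$ something), which by \cite[Theorem 3.7]{HM} gives $\depth G(I) \geq 1$, and then climbing: the alternating sum $\sum_{i\geq 2}(-1)^i h_i = 0$ with the next nonvanishing $h_i$ sitting in homological degree related to the depth forces $\depth G(I) \geq d-2$. The cases $\lambda(I^2/JI) = 1, 2$ are handled by the same bookkeeping, with more slack in the $\sum h_i$ budget (total $\leq 2$ resp.\ $\leq 1$ beyond the $I^2/JI$ contribution), and here one expects $\depth G(I)\geq d-1$ or the Cohen--Macaulay conclusion outright.

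Once $\depth G(I)\geq d-2$ is established, independence of $r(I)$ is not automatic from Huckaba--Trung alone (that needs $\depth G(I)\geq d-1$); I would invoke Wu's theorem \cite{Wu}: if $\depth G(I)\geq d-2$ and $r(I)\geq n(I)+d-1$, then $r(I)$ is independent. So the final step is to bound the reduction number, showing $r_J(I)$ is small enough (say $r_J(I)\leq 4$ or $5$, via \cite[Theorem 3.4]{OR} or the Huckaba--Marley length formulas which bound $\sum_n \lambda(I^n/JI^{n-1})$ in terms of $e_1, e_2$) and comparing with the postulation number $n(I)$, which under $\depth G(I)\geq d-2$ satisfies $n(I)\leq r(I)-d+2$ or similar; this forces the hypothesis $r(I)\geq n(I)+d-1$ of Wu's theorem, or else $r(I)$ is so small that $G(I)$ is forced Cohen--Macaulay and we are done by Huckaba--Trung.

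The main obstacle I anticipate is the middle case $\lambda(I^2/JI)=3$ with a nonzero-but-cancelling contribution from the $h_i$: translating $\sum_{i\geq 2}(-1)^i h_i = 0$ into a genuine lower bound $\depth G(I)\geq d-2$ requires knowing that the $h_i$ cannot cancel "across" a depth gap — i.e.\ that if $\depth G(I) = d-j$ then $h_{j}, \dots$ have controlled signs, for which I would need the precise structure of $H_i(C_\bullet(n))$ from \cite{HM} (vanishing of $H_i$ for $i > d - \depth G(I)$) rather than just the summed quantities. Getting the reduction-number bound tight enough to feed Wu's theorem (as opposed to just bounding depth) is the second delicate point.
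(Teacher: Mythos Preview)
Your approach diverges from the paper's and carries real gaps you yourself flag. The paper does \emph{not} case on $\lambda(I^2/JI)$ at all; it cases on the single invariant $e_1 - e_0 + \lambda(R/I)$, which by Itoh's bound is at most $e_2 = 3$. When $e_1 - e_0 + \lambda(R/I) \leq 2$ the authors cite \cite[Lemma 3.15]{MNO} to get $\depth G(I) \geq d-1$ (so independence by Huckaba--Trung). In the extremal case $e_1 - e_0 + \lambda(R/I) = 3 = e_2$ they invoke the Ozeki--Rossi structure theorem \cite[Corollary 4.7]{OR}, which for integrally closed $I$ with $e_2 = e_1 - e_0 + \lambda(R/I)$ gives $\depth G(I) \geq d-2$ directly; independence then comes from \cite[Proposition 3.16]{MNO}, not from Wu's criterion.

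Your route tries to re-derive the depth bound from the Huckaba--Marley alternating sums $\sum (-1)^{i-1}h_i$ and then feed Wu's theorem. Both steps are problematic. For the depth bound: knowing that $\sum_{i\geq 1}(-1)^{i-1}h_i$ is small (or zero) does \emph{not} by itself pin down $\depth G(I)$ --- the $h_i$ can cancel, and the vanishing statement you need (``$H_i(C_\bullet(n)) = 0$ for $i > d - \depth G(I)$'') goes the wrong way for extracting a lower bound on depth from an upper bound on a signed sum. This is exactly the obstacle you anticipate, and there is no elementary fix; the paper sidesteps it entirely by quoting the Sally-module analysis of Ozeki--Rossi, which gives the depth bound structurally when Itoh's inequality is an equality. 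For independence: Wu's theorem requires $r(I) \geq n(I) + d - 1$, and your sketch for verifying this (bound $r_J(I)$ via \cite{OR}, bound $n(I)$ via depth) is circular --- the postulation-number bound you want already presupposes control on $\depth G(I)$ at the level you are trying to prove, and even then the inequality can fail for small $r(I)$. The paper avoids Wu altogether by citing a tailored independence result \cite[Proposition 3.16]{MNO} for the boundary case $e_2 = e_1 - e_0 + \lambda(R/I)$.

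In short: the correct decomposition variable is $e_1 - e_0 + \lambda(R/I)$, and the key black box you are missing is \cite[Corollary 4.7]{OR}.
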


\begin{proof}
 Let ${e_2}(I)=3 $. By \cite[Theorem 12]{I} we have ${e_1} - {e_0} + \lambda ({R}/{I}) \leq {e_2}(I)=3 $. If ${e_1} - {e_0}+\lambda ({R}/{I}) \leq 2 $, then by \cite[Lemma 3.15]{MNO},$\depth G(I) \geq d-1$ and  ${r}(I) $ is independent. Hence we assume ${e_1} - {e_0} + \lambda ({R}/{I})=3 $. In this case by \cite[Corollary 4.7]{OR} we have  $\depth G(I)\geq d-2$ and so by \cite[Proposition 3.16]{MNO} $r(I)$ is independent.
\end{proof}

Ratliff and Rush \cite{RR} introduced the ideal $\tilde{I}$, which turns out to be the largest ideal containing $I$ with the same Hilbert coefficients as $I$. In particular one has the inclusions $I\subseteq\tilde{I}\subseteq\overline{I}$, where equalities hold if $I$ is integrally closed.

\begin{Lemma} \label{l2}
Let $(R,\frak{m})$ be a Cohen-Macaulay local ring of dimension $2$. Let $I$ be an $\frak{m}$-primary ideal and $J$ be a minimal reduction of $I$. If $r_J(I) \leq 2$ and $\tilde{I}=I$, then $\depth G(I) \geq 1$.
\end{Lemma}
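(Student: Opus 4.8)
The plan is to use the Huckaba–Marley machinery recalled earlier, specialized to dimension $d=2$. In this case $J=(x_1,x_2)$ for a superficial sequence, the complex $C_.(n)=C_.(x_1,x_2,n)$ has only three terms, and $H_i(C_.(n))$ can be nonzero only for $i=1,2$. Moreover, since $\tilde I = I$ means $I$ is Ratliff–Rush closed, we have $I^n : x = I^{n-1}$ already for $n\geq 2$ (not just $n\gg 0$) for a superficial element $x$; this is the standard consequence of Ratliff–Rush closedness that I would invoke first. The goal $\depth G(I)\geq 1$ is equivalent, via the usual criterion, to the statement that a single superficial element $x_1$ (equivalently its leading form $x_1^*$) is a nonzerodivisor on $G(I)$, i.e.\ $x_1 I^n \cap I^{n+2} = x_1 I^{n+1}$ for all $n$, or equivalently $(I^{n+1}:x_1) = I^n$ for all $n\geq 1$.

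First I would set up the length count. From the formulas in \S1 (after \cite[\S4]{HM}), in dimension $2$,
\[
\Delta^2[P_I(n)-H_I(n)] = \lambda(I^n/JI^{n-1}) - \lambda(H_2(C_.(n))),
\]
and since $r_J(I)\leq 2$ we have $I^n = JI^{n-1}$ for $n\geq 3$, so $\lambda(I^n/JI^{n-1})=0$ for $n\geq 3$; the only possibly nonzero contribution from that term is at $n=2$, namely $\lambda(I^2/JI)$. So everything reduces to controlling $h_2 = \sum_{n\geq 1}\lambda(H_2(C_.(n)))$ and showing $h_2 = 0$, because $\depth G(I)\geq 1$ in dimension $2$ is exactly the vanishing of $H_2(C_.(n))$ for all $n$ — this is \cite[Theorem 3.7]{HM} (the same criterion used in the proof of Proposition \ref{l1}). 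I would then identify $H_2(C_.(n))$ concretely: it is the kernel of the leftmost map $R/I^{n-2}\to (R/I^{n-1})^2$, which is $(I^{n-1}:J)/I^{n-2}$ where $(I^{n-1}:J) = (I^{n-1}:x_1)\cap(I^{n-1}:x_2)$.

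The key step is then to show $(I^{n-1}:x_1)\cap(I^{n-1}:x_2) = I^{n-2}$ for every $n$. Because $\tilde I = I$, we get $(I^{n-1}:x_i) = I^{n-2}$ for each $i$ individually as soon as $n-1\geq 2$, i.e.\ $n\geq 3$; and for $n=2$ the module $R/I^0 = 0$ and for $n=1,2$ the term is zero or trivially handled. Wait — I must be careful: $(I^{n-1}:x_i)=I^{n-2}$ from Ratliff–Rush closedness holds for all $n-1\geq 1$, i.e.\ all $n\geq 2$, provided $I$ itself (and its powers) are Ratliff–Rush; the relevant fact is that $\tilde I = I$ forces $\widetilde{I^n}=I^n$ for all $n$ as well (Ratliff–Rush: $\widetilde{I^n}=I^n$ for $n\gg0$ always, and here the equation $r_J(I)\leq 2$ plus $\tilde I=I$ should propagate it down). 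This propagation is the part I expect to be the main obstacle: verifying that the hypotheses $r_J(I)\leq 2$ and $\tilde I = I$ together force $H_2(C_.(n))=0$ for the low values $n=3$ (and implicitly $n=2$), since for large $n$ it is automatic from $I^n=JI^{n-1}$. Concretely, for $n=3$ one needs $(I^2:x_1)\cap(I^2:x_2)=I^1$, and since $\tilde I=I$ gives $(I^2:x_1)=I$ directly, this case is immediate; the only genuinely delicate value is $n=2$, where $H_2(C_.(2))=(I:J)/R$, which is zero since $I:J\subseteq R$. Thus $h_2=0$, hence by \cite[Theorem 3.7]{HM}, $H_2(C_.(n))=0$ for all $n$, which is precisely $\depth G(I)\geq 1$.

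So the proof outline is: (1) reduce $\depth G(I)\geq 1$ to the vanishing of all $H_2(C_.(n))$ via \cite[Theorem 3.7]{HM}; (2) identify $H_2(C_.(n)) \cong (I^{n-1}:J)/I^{n-2}$; (3) use $r_J(I)\leq 2$ to dispatch $n\gg 0$ and use $\tilde I = I$ (Ratliff–Rush closedness, propagated to all powers) to dispatch the remaining small $n$; conclude $h_2=0$ and hence $\depth G(I)\geq 1$. The main obstacle is step (3) for the small values of $n$, i.e.\ making the interaction between the reduction-number bound and the Ratliff–Rush hypothesis precise enough to kill $H_2(C_.(n))$ in the range $n\leq r_J(I)+1$.
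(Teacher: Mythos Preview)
Your approach via the Huckaba--Marley complex is more indirect than the paper's, and it contains a real gap. The paper's proof is three lines: (i) from $r_J(I)\le 2$ one has $I^n\cap J=JI^{n-1}$ for all $n\ge 3$; (ii) from $\tilde I=I$ one has $(I^2:x)=I$ for any superficial $x$; (iii) these two inputs feed into \cite[Proposition~2.1]{MA}, which yields $\widetilde{I^n}=I^n$ for every $n\ge 1$, and hence $\depth G(I)\ge 1$ by the standard Ratliff--Rush criterion. No $H_2(C_.(n))$ enters at all.

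The gap in your plan is the sentence ``since for large $n$ it is automatic from $I^n=JI^{n-1}$.'' The equality $I^{n-1}=JI^{n-2}$ does \emph{not} by itself force $(I^{n-1}:J)=I^{n-2}$; that implication is precisely what is at stake. You correctly dispose of $n\le 3$ using $\tilde I=I$, and superficiality handles $n\gg 0$, but the whole range $4\le n\le N$ (where $N$ is the superficiality bound) is left open --- and this is exactly the ``propagation to all powers'' that you flag as the main obstacle but never carry out. The missing argument is an induction on $m$: assuming $(I^{m-1}:x_1)=I^{m-2}$ and $I^m=JI^{m-1}$, take $a\in(I^m:x_1)$, write $ax_1=x_1b+x_2c$ with $b,c\in I^{m-1}$, use that $x_1,x_2$ is a regular sequence to get $a-b=x_2e$ with $c=x_1e$, and then the inductive hypothesis gives $e\in I^{m-2}$, so $a\in I^{m-1}$. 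This is essentially what \cite[Proposition~2.1]{MA} provides. Once you have $(I^m:x_1)=I^{m-1}$ for all $m\ge 1$, you already have $x_1^*$ a nonzerodivisor on $G(I)$, i.e.\ $\depth G(I)\ge 1$, so the reduction to $H_2(C_.(n))=0$ and the appeal to \cite[Theorem~3.7]{HM} (which in any case concerns the Cohen--Macaulay case, not depth $\ge d-1$) are unnecessary detours.
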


\begin{proof}
If $r_J(I)\leq 2$, then we have $I^{n} \cap J=JI^{n-1}$ for all $n\geq 3$. Since $\tilde{I}=I$, we have $I^{2} :x =I$ for any superficial element $x$ in $I$. Hence by \cite[Proposition 2.1]{MA}, we have  $\widetilde{I^{n}}=I^{n}$ for any $n\geq 1$ and so $\depth G(I)\geq 1$.
\end{proof}
An ideal $I$ is said to be asymptotically normal if there exists an integer $k\geq 1$ such that $I^n$ is integrally closed for all $n\geq k$. In \cite[Remark 4.3]{CPR} there are interesting examples of asymptotically normal ideals that are not normal.
The following result improves \cite[Proposition 16]{I}.

One of the main results is the next theorem.
\begin{Theorem} \label{P2}
Let $(R,\frak{m})$ be a Cohen-Macaulay local ring of dimension $3$. Let $I$ be an $\frak{m}$-primary ideal and $J$ be a minimal reduction of $I$. Assume that $I$ is an asymptotically normal ideal and $\tilde{I}=I$. Then
$P_I(n)=H_I(n)$ for $n=1,2$ if and only if $r_J(I) \leq 2$.
\end{Theorem}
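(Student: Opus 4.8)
The plan is to exploit the general length formulas recalled from \cite{HM} in the special case $d=3$, together with Lemma~\ref{l2}, to convert the equality $P_I(n)=H_I(n)$ for $n=1,2$ into a statement about the reduction number. First I would reduce to dimension one by passing to $\bar R = R/(x_1,x_2)$ for a general superficial sequence $x_1,x_2$; since $\tilde I = I$ and $I$ is asymptotically normal, the behaviour of the Hilbert function is controlled, and the conditions $P_I(n)=H_I(n)$ for $n=1,2$ are equivalent to the vanishing of the ``correction terms'' $\Delta^d[P_I(n)-H_I(n)]$ for $n=1,2$. By the Huckaba--Marley formula, $\Delta^d[P_I(n)-H_I(n)]=\lambda(I^n/JI^{n-1})-\sum_{i\ge 2}(-1)^i\lambda(H_i(C_\cdot(n)))$, so I would first pin down the homology contributions.

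Next I would argue the backward direction. Assume $r_J(I)\le 2$. Then $I^{n+1}=JI^n$ for $n\ge 2$, so $I^n\cap J = JI^{n-1}$ for all $n\ge 3$, and in particular the tail of the sum $\sum_{n\ge 1}\lambda(I^n/I^n\cap J)$ truncates. Combined with $\tilde I = I$, Lemma~\ref{l2} (applied after cutting down to dimension $2$) gives $\depth G(I)\ge 1$, which forces $H_i(C_\cdot(n))=0$ for $i\ge d$ and controls the remaining homology by \cite[Theorem 3.7]{HM}. A short computation with the displayed formula for $\Delta^d[P_I(n)-H_I(n)]$ then shows this quantity vanishes for $n=1,2$, i.e. $P_I(n)=H_I(n)$ there. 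For the forward direction, I would assume $P_I(n)=H_I(n)$ for $n=1,2$; feeding this into $e_1(I)=\sum_{n\ge 1}\lambda(I^n/JI^{n-1})-\sum_{i\ge 2}(-1)^i h_i$ and the analogous $e_2$ formula, together with the asymptotic normality (which by Itoh-type results makes the higher homology and tail terms nonnegative), I would show that all but finitely many terms must vanish, which bounds $\lambda(I^n/JI^{n-1})=0$ for $n\ge 3$, hence $I^{n+1}=JI^n$ for $n\ge 2$, i.e. $r_J(I)\le 2$.

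The main obstacle I anticipate is the forward implication: turning the two numerical equalities into a genuine vanishing of all the higher terms $\lambda(I^n/JI^{n-1})$ and $\lambda(H_i(C_\cdot(n)))$ for $n\ge 3$. This requires that each such term be nonnegative \emph{and} that their weighted sum is forced to be zero — the nonnegativity of the $H_i$ contributions is automatic, but controlling $\lambda(I^n/JI^{n-1})$ for $n\ge 3$ is exactly where asymptotic normality and $\tilde I = I$ must be used, presumably via the inequality $e_2(I)\ge \sum_{n\ge 2}(n-1)\lambda(I^n/JI^{n-1})$ and a comparison of $e_2$ with the partial sums coming from $n=1,2$. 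One subtlety is that $P_I(1)=H_I(1)$ is automatic (both equal $\lambda(R/I)$), so really the hypothesis is a single equation $P_I(2)=H_I(2)$, and I would need to check that this single equation, under the standing hypotheses on $I$, already propagates to kill all higher-order discrepancies.

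I would organize the write-up as: (1) cut down to $\dim 1$ and record the reductions of all relevant quantities; (2) prove the easy direction using Lemma~\ref{l2} and \cite[Theorem 3.7]{HM}; (3) prove the hard direction by a careful sign/nonnegativity analysis of the Huckaba--Marley formulas, invoking asymptotic normality for the key positivity input. The remark in the excerpt that this ``improves \cite[Proposition 16]{I}'' suggests the comparison with Itoh's bound $e_1-e_0+\lambda(R/I)\le e_2$ is the quantitative engine, so I would keep that inequality and its equality cases visible throughout.
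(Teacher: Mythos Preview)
Your outline has the right shape in the backward direction but contains a real gap in the forward direction, and a few slips elsewhere.

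First, two small points. You say you will ``reduce to dimension one by passing to $R/(x_1,x_2)$'' and then invoke Lemma~\ref{l2} ``after cutting down to dimension~$2$''; these are inconsistent, and the correct move is the second one: go modulo a single superficial element $x$, work in the $2$-dimensional ring $A=R/(x)$, $B=IA$, $C=JA$. Asymptotic normality of $I$ is what guarantees $\tilde B=B$ (via \cite[Corollary~7.11]{TO}); without that step Lemma~\ref{l2} is not available. Also, once Lemma~\ref{l2} gives $\depth G(B)\ge 1$, Sally's machine \cite[Lemma~2.2]{HM} lifts this to $\depth G(I)\ge 2$, not merely $\ge 1$; with $\depth G(I)\ge d-1$ you can then quote \cite[Theorem~2]{M1} to get $r_J(I)=n(I)+3$, hence $n(I)\le -1$ and $P_I(n)=H_I(n)$ for all $n\ge 0$. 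Your route via a direct computation of $\Delta^d[P_I-H_I](n)$ for $n=1,2$ would not by itself yield $P_I(n)=H_I(n)$, since $\Delta^3$ applied at $n$ involves values at $n,n-1,n-2,n-3$ and $P_I(0),P_I(-1)$ are not zero in general.

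The serious problem is the forward implication. Your claim that ``$P_I(1)=H_I(1)$ is automatic (both equal $\lambda(R/I)$)'' is false: $P_I(1)=e_0-e_1+e_2-e_3$, and there is no reason this equals $\lambda(R/I)$ a priori. So you genuinely have two conditions, not one. More importantly, ``feeding $P_I(n)=H_I(n)$ into the $e_1,e_2$ formulas'' does nothing: $e_1$ and $e_2$ are coefficients of $P_I$ and are unaffected by whether $P_I$ agrees with $H_I$ at a given $n$, so the nonnegativity/weighted-sum argument you sketch has no input to work with. The paper's argument is quite different and is the missing idea: use \cite[Lemma~2.3]{Wu} to transfer the equalities $P_I(n)=H_I(n)$ for $n=1,2$ down to $P_B(n)=H_B(n)$ for $n=1,2$ in dimension~$2$; then invoke \cite[Proposition~16]{I} (the very result the theorem is said to improve) to conclude $r_C(B)\le 2$; then Lemma~\ref{l2} plus Sally's machine give $\depth G(I)\ge 2$, which lets you lift $r_C(B)\le 2$ back to $r_J(I)\le 2$. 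Your reference to Itoh is to the wrong statement: it is Proposition~16 of \cite{I} (the $d=2$ characterization of $r_J\le 2$ via $P_I=H_I$ at $n=1,2$), not the inequality $e_1-e_0+\lambda(R/I)\le e_2$, that drives the forward direction.
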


\begin{proof}
 Let $x$ be superficial element of $I$ and set $A=R/(x)$, $B=IA$ and $C=JA$. Then  $\dim A=2$. Since $I$ is an asymptotically normal ideal by \cite[Corollary 7.11]{TO} $\tilde{B}=B$.
 If $r_J(I)\leq 2$, then $r_C(B)\leq 2$. Therefore by Lemma 1.3 $\depth G(B)\geq 1$ and so by \cite[Lemma 2.2]{HM} $\depth G(I)\geq 2$. Hence by \cite[Theorem 2]{M1} we have $r_J(I)=n(I)+3$. Thus $n(I)\leq -1$ and $P_I(n)=H_I(n)$ for all $n\geq 0$. Conversely,
 If $P_I(n)=H_I(n)$ for $n=1,2$, then by \cite[Lemma 2.3]{Wu} we have $P_B(n)=H_B(n)$ for $n=1,2$. So by \cite[Proposition 16]{I} $r_C(B)\leq 2$  and by Lemma 1.3 $\depth G(B)\geq 1$. Therefore $\depth G(I)\geq 2$  and $r_J(I) \leq 2$.
\end{proof}

Let $\mathcal{R}(I)$ be the Rees-algebra of $I$ and $E$ be an $\mathcal{R}(I)$-module. Then in the following theorem we set $H^i(E)$ to be the $ith$- local cohomology module of $E$ with respect to the maximal homogeneous ideal $\mathcal M=(m,It)$ of $\mathcal{R}(I)$ as the support.

The second main result with application is the following theorem.
\begin{Theorem} \label{P3}
Let $(R,\frak{m})$ be a Cohen-Macaulay local ring of dimension $4$. Let $I$ be an $\frak{m}$-primary ideal and $J$ be a minimal reduction of $I$. If $I$ is an asymptotically normal ideal and $ r_{J}(I)\leq 3 $, then $ e_{4}(I)\leq 0. $
\end{Theorem}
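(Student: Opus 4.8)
The plan is to reduce from dimension $4$ to a lower-dimensional situation by cutting with a superficial element, exactly as in the proof of Theorem~\ref{P2}, and then to invoke a known bound on $e_3$ of the reduced ideal together with the fact that cutting by a superficial element shifts the Hilbert coefficients. First I would pick a superficial element $x\in I$ and set $A=R/(x)$, $B=IA$, $C=JA$; then $\dim A=3$ and $r_C(B)\leq r_J(I)\leq 3$. Since $I$ is asymptotically normal, a result of \cite{TO} (as quoted in the proof of Theorem~\ref{P2}) gives that $B$ is again asymptotically normal, and in particular $\tilde B=B$ is available if needed. The point of passing to $A$ is that $e_4(I)$ relates to $e_3(B)$: by the standard superficial-element formula $e_i(I)=e_i(B)$ for $0\leq i\leq d-1$ under mild hypotheses on depth, but more to the point there is a formula of the form $e_d(I)=\sum_{n\geq ?}(\cdots)\lambda(H_\bullet(C_\bullet(n)))$ coming from the Huckaba--Marley complex discussion in Section~1, and the top coefficient $e_4$ is expressed through the homology modules $H_i(C_\bullet(n))$ which, thanks to $r_J(I)\leq 3$, vanish in the relevant range.

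The cleaner route is: show $\depth G(B)\geq 1$, hence $\depth G(I)\geq 2$, using Lemma~\ref{l2}-type reasoning (the hypothesis $r_C(B)\le 3$ is a bit weaker than $r_C(B)\le 2$, so one may instead need \cite[Lemma 2.2]{HM} together with an Itoh-type statement). Once $\depth G(I)\geq 2$, the homology modules $H_i(C_\bullet(n))$ with $i\leq d-2=2$ vanish, so the expression for $e_4(I)$ collapses to
\[
e_4(I)=\sum_{n\geq 4}\binom{n-1}{3}\,\lambda\!\left(I^n/(I^n\cap J)\right)-\sum_{i=3}^{4}(-1)^i\cdot(\text{a weighted sum of }\lambda(H_i(C_\bullet(n)))),
\]
and since $r_J(I)\leq 3$ forces $I^n\cap J=JI^{n-1}$ and $\lambda(I^n/JI^{n-1})=0$ for all $n\geq 4$, the first sum is $0$. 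The remaining contribution is $-\lambda(H_4(C_\bullet(\cdot)))$-type terms with the correct sign; because $d=4$ and $H_4$ is the top homology of a complex of length $4$, one shows $H_4(C_\bullet(n))$ injects appropriately (from the long exact sequence in Section~1 together with $\depth\ge 2$) and what survives contributes nonpositively to $e_4$.

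The main obstacle I anticipate is bookkeeping with signs and index ranges in the top Hilbert coefficient formula: $e_4$ sits at the boundary where the Huckaba--Marley complex has length equal to the dimension, so the homology $H_d$ does not automatically vanish from depth considerations, and one must argue directly that the surviving terms have the right sign. A secondary subtlety is that $r_J(I)\leq 3$ (rather than $\le 2$) is weaker than what Lemma~\ref{l2} uses, so lifting ``$\depth G(I)\geq 2$'' may require either an asymptotic-normality input (Itoh's theorem that normal ideals have $\depth G\ge$ something, or the Ratliff--Rush stability $\widetilde{I^n}=I^n$) or a more careful analysis of $H_i(C_\bullet(n))$ for $n=4$ specifically. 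I would handle this by combining \cite[Corollary 7.11]{TO} (asymptotic normality descends to $B$) with \cite[Proposition 2.1]{MA} to get Ratliff--Rush stability of all powers, then feed this into the depth estimate as in Lemma~\ref{l2}. Once $\depth G(I)\ge 2$ is secured, the vanishing $\lambda(I^n/JI^{n-1})=0$ for $n\ge 4$ does the rest, giving $e_4(I)\le 0$.
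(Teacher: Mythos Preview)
Your approach is genuinely different from the paper's, and in its present form it has a real gap.

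The paper does \emph{not} cut by a superficial element and does \emph{not} use the Huckaba--Marley complex $C_\bullet(n)$ at all. Instead it passes to a high power $K=I^n$ with $n\gg 0$ and works with the local cohomology of $G(K)$ together with the Grothendieck--Serre formula. Asymptotic normality of $I$ is used via \cite[Theorem~7.3]{TO} to get $\depth G(I^n)\ge 2$ for $n\gg 0$, and the bound $r_J(I)\le 3$ is used, through results of Hoa and Trung, to force $H^4(G(I^n))_0=0$. One then computes, for $h_i:=\lambda(H^i(G(K))_0)$, that $h_0=h_1=h_2=h_4=0$, and the Grothendieck--Serre formula at degree $0$ gives $\lambda(R/K)-(c_0-c_1+c_2-c_3)=-h_3$. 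Since $\phi_K(z)=\phi_I(nz)$, one has $c_4=e_4(I)$ and $\phi_K(1)=\lambda(R/K)$, whence $e_4(I)=c_4=-h_3\le 0$. The crucial move is thus the passage to $I^n$ (where the depth estimate holds) together with the invariance $e_4(I^n)=e_4(I)$.

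The gap in your outline is precisely at the step ``show $\depth G(I)\ge 2$''. From the hypotheses you only get $\depth G(I^n)\ge 2$ for $n$ large, \emph{not} for $I$ itself; Lemma~\ref{l2} needs $r\le 2$ and dimension $2$, and neither Ratliff--Rush stability of all powers nor \cite[Corollary~7.11]{TO} upgrades this to $\depth G(I)\ge 2$ when $r_J(I)=3$ in dimension $4$. Without that depth bound, in the Huckaba--Marley expression
\[
e_4(I)=-\sum_{n\ge 4}\binom{n-1}{3}\bigl[\lambda(H_2(C_\bullet(n)))-\lambda(H_3(C_\bullet(n)))+\lambda(H_4(C_\bullet(n)))\bigr]
\]
(the $\lambda(I^n/JI^{n-1})$ terms do vanish for $n\ge 4$ from $r_J(I)\le 3$, as you say), you have no control over the sign of the alternating sum: $H_3$ enters with the wrong sign and nothing in your toolkit kills it. The paper sidesteps this entirely by replacing $I$ with $I^n$ and switching from Koszul-type homology to local cohomology of the associated graded ring, where the needed vanishings are available.
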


\begin{proof}
By \cite[Lemma 2.4]{HO} for $n\gg 0$ we have
\begin{align*}
{H^i}({G(I^{n}))_j} = 0\;\;  for~ j \geq 1\;and\;i=0, 1, 2, 3, 4
\end{align*}
and since $ r_{J}(I)\leq 3 $ we get by \cite[Proposition 3.2]{T} and \cite[Lemma 2.4]{HO}
\begin{align*}
{H^4}({G({I^n}))_0} = 0.
\end{align*}
Let $q$ be the integer such that $I^{q} $ is normal. By \cite[Theorem 7.3]{TO}, $\depth G(I^{n})\geq 2$ for $n\gg 0$ and so ${H^i}({G(I^{n})})=0$ for $n=0,1$. Thus
\begin{align*}
{a_2}({I^n})<{a_3}({I^n})=0
\end{align*}
and therefore
${a_2}({I^n})\leq -1$
  and
 ${H^2}({G({I^n}))_0} = 0.$

Set $h_{i}=\lambda({H^i}({G({I^n}))_0})$ for $i=0,1,2,3,4.$
Then
\[{h_0} = {h_1} = {h_2} = {h_4} = 0.\]
Set $K:=I^{n}$  for $n\gg 0$ and let
\[P_K(z) = {c_0}\left( {\begin{array}{*{20}{c}}
{z + 3}\\
3
\end{array}} \right) - {c_1}\left( {\begin{array}{*{20}{c}}
{z + 2}\\
2
\end{array}} \right) + {c_2}\left( {\begin{array}{*{20}{c}}
{z + 1}\\
1
\end{array}} \right) - {c_3}\]
be the Hilbert polynomial of $ G(K) $ i.e.
\[P_K(i) = \lambda \left( {{{{K^i}} \mathord{\left/
 {\vphantom {{{K^i}} {{K^{i + 1}}}}} \right.
 \kern-\nulldelimiterspace} {{K^{i + 1}}}}} \right){\kern 1pt} {\kern 1pt} for{\kern 1pt} {\kern 1pt} i \gg 0.\]
 By Grothendieck-Serre formula  we get
 \[{H_K}(i) - {P_K}(i) = \sum\limits_{s = 0}^4 {{{( - 1)}^s}\lambda ({H^s}{{({G}(K))}_i})}. \]
 Set  $ i=0 $ we get
 \[\lambda \left( {{R \mathord{\left/
 {\vphantom {R {{I^r}}}} \right.
 \kern-\nulldelimiterspace} {{I^n}}}} \right) - \left[ {{c_0} - {c_1} + {c_2} - {c_3}} \right] = {h_0} - {h_1} + {h_2} - {h_3} + {h_4} =  - {h_3}.\]
 Let $\phi_I(n)$ be the  Hilbert-Samuel polynomial of  I i.e.,
 \[\phi_I(z) = {e_0}(I)\left( {\begin{array}{*{20}{c}}
{z + 3}\\
4
\end{array}} \right) - {e_1}(I)\left( {\begin{array}{*{20}{c}}
{z + 2}\\
3
\end{array}} \right) + {e_2}(I)\left( {\begin{array}{*{20}{c}}
{z + 1}\\
2
\end{array}} \right) - {e_3}(I)\left( {\begin{array}{*{20}{c}}
{z }\\
1
\end{array}} \right) + {e_4}(I).\]

 Write
 \[\phi_{K} (z) = {c_0}\left( {\begin{array}{*{20}{c}}
{z + 3}\\
4
\end{array}} \right) - {c_1}\left( {\begin{array}{*{20}{c}}
{z + 2}\\
3
\end{array}} \right) + {c_2}\left( {\begin{array}{*{20}{c}}
{z + 1}\\
2
\end{array}} \right) - {c_3}\left( {\begin{array}{*{20}{c}}
z\\
1
\end{array}} \right) + {c_4}.\]
Clearly $ \phi _K(z)=\phi _I(nz) $.
In particular $ c_{4}=e _{4}(I) $.
Also notice that
\[{\phi _K}(1) = {c_0} - {c_1} + {c_2} - {c_3} + {c_4} = {\phi _I}(n) = \lambda \left( {{R \mathord{\left/
 {\vphantom {R {{I^n}}}} \right.
 \kern-\nulldelimiterspace} {{I^n}}}} \right).\]
 So we get $ {c_4} =  - {h_3} $. Thus ${e_4} =  - {h_3} $. Thus $ {e_4}\leq 0 $.
\end{proof}

For any ideal $I$ the set of ideals ${(I^{n+1}:I^n)}$ forms an ascending chain. Let $\widetilde{I}$ denote the union of these ideals. Ratliff and Rush \cite{RR} showed that $\widetilde{I}$ is the largest ideal containing $I$ which has the same Hilbert polynomial as $I$.
 In the following proposition we use notations: ${\mathcal{B} ^I}(x,R)=\bigoplus_{n=0}^{\infty}\frac{I^{n+1}:x}{I^n}$ and $L^I(R)=\bigoplus_{n=0}^{\infty}R/I^{n+1}$.

 \begin{Proposition} \label{P1}
Let $(R, \frak{m})$ be a Cohen-Macaulay local ring of dimension $3$. Let $I$ be an $\frak{m}$-primary ideal and $J$ be a minimal reduction of $I$. If  ${e_2}(I)\leq 1$,then ${e_3}(I) \leq 0$.
\end{Proposition}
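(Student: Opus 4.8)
The plan is to imitate the strategy of Theorem \ref{P3} but one dimension lower, passing to a ring of dimension $3$ and reducing the claim $e_3(I)\le 0$ to the vanishing of a single local cohomology length. First I would pick a superficial element $x$ for $I$ and work with the extended Rees algebra / fiber cone machinery over $\mathcal{R}(I)$, using the maximal homogeneous ideal $\mathcal M=(\mathfrak m, It)$ as support. The key structural input is the exact sequence relating $\mathcal{B}^I(x,R)$, $L^I(R)$, and $L^I(R/(x))$; the module $\mathcal{B}^I(x,R)$ measures the failure of $x$ to be a nonzerodivisor on the associated graded ring in each degree, so $\tilde I=I$ (equivalently $I^{n+1}:x=I^n$ for $n\gg0$, which is automatic here) controls its finite length.

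Next I would set $K:=I^n$ for $n\gg0$, exactly as in the proof of Theorem \ref{P3}. Since $e_2(I)\le 1$, the hypothesis combines with \cite[Theorem 12]{I} (or the Corso–Polini–Rossi observation on $e_2\in\{0,1,2\}$ quoted after Proposition \ref{l1}) to force $G(I^n)$ to have reasonably high depth for $n\gg0$: I expect $\depth G(I^n)\ge 2$ when $\dim R=3$, which kills $H^0$ and $H^1$ of $G(I^n)$ in the relevant degree. One then shows the $a$-invariants satisfy $a_2(I^n)<a_3(I^n)=0$ for $n\gg0$, so $H^2(G(I^n))_0=0$ and $H^3(G(I^n))_0=0$ as well, leaving at most $H^3$ in degree $0$ — wait, in dimension $3$ the top cohomology is $H^3$, so I would instead argue $H^3(G(I^n))_0$ need not vanish and that it carries the sign. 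Writing the Grothendieck–Serre formula for $G(K)$ in degree $0$, together with $\phi_K(z)=\phi_I(nz)$ so that $c_3=e_3(I)$, and evaluating $\phi_K(1)=\lambda(R/I^n)$, the cross terms telescope and one is left with $e_3(I)=-h_3=-\lambda(H^3(G(I^n))_0)\le 0$.

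The main obstacle I anticipate is precisely pinning down which local cohomology modules vanish in degree $0$ under the sole hypothesis $e_2(I)\le 1$ — i.e. proving $\depth G(I^n)\ge 2$ for $n\gg0$ rather than just $\ge 1$, and establishing $a_2(I^n)\le -1$. For this I would lean on the asymptotic depth results used in Theorem \ref{P3} (the Trung–Hoa / \cite{TO} type statements) combined with the classification of integrally closed ideals with small $e_2$; since $I^n$ is not assumed normal here, I may need the Ratliff–Rush hypothesis $\tilde I = I$ to be in force (it is recorded just before this proposition via $\widetilde I$) to guarantee $\widetilde{I^n}=I^n$ through a result like \cite[Proposition 2.1]{MA}, as in Lemma \ref{l2}. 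Once the vanishing of $h_0,h_1,h_2$ is secured, the rest is the same bookkeeping with binomial coefficients as in the proof of Theorem \ref{P3} and should be routine.
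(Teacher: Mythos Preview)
Your plan to replay the Grothendieck--Serre argument of Theorem~\ref{P3} one dimension lower has a genuine gap, in fact two.

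First, there is a parity error. In dimension $3$, with $K=I^n$ and $c_3=e_3(I)$, the Grothendieck--Serre formula at degree $0$ together with $\phi_K(1)=\lambda(R/I^n)$ gives
\[
e_3(I)=c_3=h_3-h_2+h_1-h_0,
\]
not $-h_3$. So if you really managed to kill $h_0,h_1$ (via $\depth G(I^n)\ge 2$) and $h_2$ (via $a_2(I^n)\le -1$), the surviving term would be $e_3(I)=+h_3\ge 0$, the \emph{opposite} inequality. In Theorem~\ref{P3} the scheme works because $d=4$ is even: one kills $h_0,h_1,h_2,h_4$ and is left with $-h_3$; in odd dimension the sign flips. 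To salvage the approach you would instead need $h_3=0$ (a reduction-number bound of the type $r_J(I)\le 2$), leaving $e_3(I)=-h_2$; but no such bound is assumed.

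Second, the vanishing inputs you need are simply unavailable under the sole hypothesis $e_2(I)\le 1$. In Theorem~\ref{P3} the asymptotic normality of $I$ was essential to invoke \cite[Theorem~7.3]{TO} and obtain $\depth G(I^n)\ge 2$; here there is no normality assumption, and you are not allowed to add $\widetilde I=I$ either. Example~1.7 already has $\depth G(I)=0$ and $e_3=-15$, so one cannot expect uniform high depth of $G(I^n)$ from $e_2\le 1$ alone.

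The paper's argument is entirely different and avoids these issues. One first disposes of $e_2(I)=0$ by \cite[Proposition~6.4]{TON}. For $e_2(I)=1$ one passes modulo a superficial element $x$ to $A=R/(x)$, $B=IA$, $C=JA$ with $\dim A=2$, and works with the Ratliff--Rush filtration, for which the associated graded ring in dimension $2$ is automatically well behaved. From
\[
e_2(B)=\widetilde{e_2(B)}=\sum_{n\ge 1}n\,\lambda\bigl(\widetilde{B^{n+1}}/C\widetilde{B^n}\bigr)=1
\]
one forces $\lambda(\widetilde{B^{n+1}}/C\widetilde{B^n})=0$ for all $n\ge 2$, hence $\widetilde{e_3(B)}=0$. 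The exact sequence of \cite[6.3]{TO} then gives $b:=\lambda(\mathcal B^I(x,R))\le r:=\lambda(H^0(L^I(A)))$, and combining $e_3(B)=\widetilde{e_3(B)}-r$ \cite[1.5]{TON} with $e_3(I)=e_3(B)+b$ \cite[Proposition~1.2]{RV} yields $e_3(I)=b-r\le 0$. The key idea you are missing is this reduction to dimension~$2$ together with the use of the Ratliff--Rush filtration, which converts the numerical hypothesis $e_2=1$ directly into the vanishing $\widetilde{e_3(B)}=0$ without any depth or normality assumptions on $G(I^n)$.
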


\begin{proof}
 By \cite [Proposition 6.4]{TON} we can assume that ${e_2}(I)=1$. Let $x$ be a superficial element of $I$. Set $A=R/(x)$ , $B=IA$ and $C=JA$, then  $\dim A=2$, ${e_i}(I)={e_i}(B)$ for $i=0,1,2$ and ${e_2}(B)=1$. By \cite[Corollary 4.13]{HM} we have
 \[{e_2}(B) = \widetilde {{e_2}(B)}=\sum\limits_{n \ge 1}^{} {n \lambda ({{{\widetilde {B^{n + 1}}}} \mathord{\left/
 {\vphantom {{{B^{n + 1}}} {C  {{B^n}}}}} \right.
 \kern-\nulldelimiterspace} {C \widetilde {{B^n}}}})}=1.\]
 Therefore we have
 \[\sum\limits_{n \ge 2}^{} {\lambda ({{{\widetilde {B^{n + 1}}}} \mathord{\left/
 {\vphantom {{{B^{n + 1}}} {C  {{B^n}}}}} \right.
 \kern-\nulldelimiterspace} {C \widetilde {{B^n}}}})}=0 .\]
 Thus by \cite[Proposition 1.9]{GR} we have
 \[\widetilde {{e_3}(B)} = \sum\limits_{n \ge 2}^{} {\left( {\begin{array}{*{20}{c}}
n\\
2
\end{array}} \right) \lambda ({{{\widetilde {B^{n + 1}}}} \mathord{\left/
 {\vphantom {{{B^{n + 1}}} {C  {{B^n}}}}} \right.
 \kern-\nulldelimiterspace} {C \widetilde {{B^n}}}})}=0. \]

  By using \cite[6.3]{TO} we have the following exact sequence
 \[0 \to {\mathcal{B} ^I}(x,R) \to {H^0}({L^I}(R))(-1) \to {H^0}({L^I}(R)) \to {H^0}({L^I}(A)).\]
 Therefore we obtain
 \[b:= \lambda(\mathcal{B} ^I(x,R)) \leq \lambda({H^0}({L^I}(A)))=:r.\]
 By using \cite[1.5]{TON} we get that ${{e_3}(B)}=\widetilde {{e_3}(B)}-r$ and by \cite[Proposition 1.2]{RV} ${{e_3}}(I)= {{e_3}}(B)+b$
then $ {e_3}(I) \leq 0$.
\end{proof}

As an application of Proposition 1.6 we give the following example.
The computation of examples is performed by using Macaulay2 \cite{GS} and CoCoA \cite{AB}.
For the convenience of the reader we calculate Hilbert series and Hilbert polynomial of the following example.

\begin{Example}\label{e}
Let $R=k[x,y,z]_{(x ,y, z)}$, where $k$ is a field, and let $I=(x^3, y^3, x^2y+z^3, xz^2,y^2z+x^2z)$. Then $\depth G(I)=0$  and we have the following
Hilbert series
\[{P_I}(t) = \frac{{16 + 5t + 5{t^2} -5{t^3} +6{t^4} +10{t^5} -13 {t^6}+2{t^7} + t^8}}{{{{(1 - t)}^3}}},\]
and
Hilbert polynomial
\[{P_I}(n) = 27\left( {\begin{array}{*{20}{c}}
{n + 2}\\
3
\end{array}} \right)
-18\left( {\begin{array}{*{20}{c}}
{n + 1}\\
2
\end{array}} \right)
+\left( {\begin{array}{*{20}{c}}
n \\
1
\end{array}} \right)
  +15.\]

Hence $e_2(I)=1$ and $e_3(I)\leq 0$.

\end{Example}

\begin{proof}
Let
 \[P_I (t) =f(t)/ (1 - t)^3 \]
be the Hilbert series of the ideal $I$ and
\[ f(t)=a_{0} + a_{1}t + a_{2}t^2 +a_{3}t^3 + a_{4}t^4 + a_{5}t^5 + a_{6}t^6 + a_{7}t^7 + a_{8}t^8.\]
Then \[a_{0}=\lambda(R/I) , \quad \quad a_{i}=\lambda(I^{i}/I^{i+1})- \sum\limits_{n = 0}^{i - 1} {\left( {\begin{array}{*{20}{c}}
{d + n}\\
{d - 1}
\end{array}} \right)} {a_{i - 1 - n}},\]
where $i$ is a non-negative integer.\\
Therefore by using Macaulay2, we have $\lambda(R/I)=16, \lambda(I/I^2)=53, \lambda(I^2/I^3)=116, \lambda(I^3/I^4)=200,
 \lambda(I^4/I^5)=311, \lambda(I^5/I^6)=459, \lambda(I^6/I^7)=631, \lambda(I^7/I^8)=829, \lambda(I^8/I^9)=1054, \lambda(I^9/I^{10})=1306, \lambda(I^{10}/I^{11})=1585$ and so on. Hence we can obtain the following
\[ a_0=\lambda(R/I)=16\]
\[ a_{1}=\lambda(I/I^2)-3\lambda(R/I)=5\]
\[a_2=\lambda(I^2/I^3)-6\lambda(I/I^2)-3\lambda(R/I)=5\]
and also by the above formula we have $a_3=-5, a_4=6, a_5=10, a_6=-13, a_7=2, a_8=1, a_9=0, a_{10}=0.$\\
For the computing of the Hilbert polynomial, we have
$e_{0}=f(1)=27, e_{1}=f^{\prime}(1)/1!=18,e_{2}=f^{\prime \prime}(1)/2!=1, e_{3}(I)=f^{(3)}(1)/3!=-15$.
This completes the proof.
\end{proof}

\begin{Theorem} \label{P1}
Let $(R,\frak{m})$ be a Cohen-Macaulay local ring of dimension $3$, $I$ an $\frak{m}$-primary integrally closed ideal and $J$ a minimal reduction of $I$. If ${e_1}(I) - {e_0}(I) + \lambda(R/I)={e_2}(I)$, then ${e_3}(I) \leq 0$.
\end{Theorem}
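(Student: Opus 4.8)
The plan is to mimic the proof of Proposition~1.6: kill a superficial element to descend to dimension two, use the hypothesis to force the vanishing of the ``virtual'' third Hilbert coefficient of the reduced ideal, and then feed this into the local cohomology exact sequence of \cite{TO}. The role played in Proposition~1.6 by the inequality $e_2\le 1$ is played here by the equality $e_1-e_0+\lambda(R/I)=e_2$ together with $\overline I=I$.

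First I would choose a superficial element $x\in J$ for $I$ and set $A=R/(x)$, $B=IA$, $C=JA$. Then $A$ is a two--dimensional Cohen--Macaulay local ring, $C$ is a minimal reduction of $B$, $e_i(I)=e_i(B)$ for $i=0,1,2$, and $\lambda(A/B)=\lambda(R/I)$ since $x\in I$; as $e_0(B)=\lambda(A/C)$ ($C$ being a parameter ideal in a Cohen--Macaulay ring), this gives $e_0(B)-\lambda(A/B)=\lambda(B/C)$, so the hypothesis transfers to $e_1(B)-\lambda(B/C)=e_2(B)$.

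The decisive step is to prove that $B=IA$ is Ratliff--Rush closed, $\widetilde B=B$. This is where integral closedness enters: using $\overline I=I$ together with the structure theory for integrally closed ideals attaining equality in Itoh's bound $e_2\ge e_1-e_0+\lambda(R/I)$ (\cite[Theorem 12]{I}; see also \cite{OR}), one obtains that $B=IA$ is Ratliff--Rush closed --- for instance, because the equality forces $\depth G(I)\ge d-1=2$, whence $x^{*}$ is a nonzerodivisor on $G(I)$, $G(B)\cong G(I)/(x^{*})$ has positive depth, and $\widetilde{B^{n}}=B^{n}$ for all $n$. Granting $\widetilde B=B$, the dimension-two Huckaba--Marley formulas \cite[Corollary 4.13]{HM} written for the Ratliff--Rush filtration ($e_2(B)=\sum_{n\ge1}n\,\lambda(\widetilde{B^{n+1}}/C\widetilde{B^{n}})$ and $e_1(B)=\lambda(\widetilde B/C)+\sum_{n\ge1}\lambda(\widetilde{B^{n+1}}/C\widetilde{B^{n}})$), combined with $e_1(B)-\lambda(B/C)=e_2(B)$, force $\sum_{n\ge2}(n-1)\lambda(\widetilde{B^{n+1}}/C\widetilde{B^{n}})=0$; hence $\widetilde{B^{n+1}}=C\widetilde{B^{n}}$ for all $n\ge2$, and therefore, by \cite[Proposition 1.9]{GR},
\[
\widetilde{e_3(B)}=\sum_{n\ge2}\binom{n}{2}\lambda(\widetilde{B^{n+1}}/C\widetilde{B^{n}})=0 .
\]

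Finally I would conclude as in Proposition~1.6. The exact sequence \cite[6.3]{TO}
\[
0\to\mathcal{B}^{I}(x,R)\to H^{0}(L^{I}(R))(-1)\to H^{0}(L^{I}(R))\to H^{0}(L^{I}(A))
\]
gives $b:=\lambda(\mathcal{B}^{I}(x,R))\le r:=\lambda(H^{0}(L^{I}(A)))$; by \cite[1.5]{TON} one has $e_3(B)=\widetilde{e_3(B)}-r=-r$; and by \cite[Proposition 1.2]{RV} one has $e_3(I)=e_3(B)+b=b-r\le 0$, which is the assertion (in fact $r=0$ under the hypothesis, so the argument even gives $e_3(I)=0$). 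The main obstacle is precisely the decisive step: extracting, from the purely numerical equality $e_2=e_1-e_0+\lambda(R/I)$ under $\overline I=I$, the ring-theoretic fact that $B=IA$ is Ratliff--Rush closed (equivalently, enough depth of $G(I)$ to survive the reduction); if that fails, the sign of $\widetilde{e_3(B)}$ is no longer controlled and the final inequality collapses. Everything else is routine manipulation with the formulas of \cite{HM}, \cite{GR}, \cite{TON}, \cite{RV} and the exact sequence of \cite{TO}.
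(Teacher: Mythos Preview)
Your overall architecture is exactly the paper's: reduce modulo a superficial element, use the Huckaba--Marley/Guerrieri--Rossi formulas for the Ratliff--Rush filtration in dimension two to force $\widetilde{e_3(B)}=0$, and finish with the exact sequence of \cite{TO} together with \cite[1.5]{TON} and \cite[Proposition~1.2]{RV}. The difference is concentrated entirely in the ``decisive step'' you flagged, and there your proposed justification is wrong.

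You argue that, for integrally closed $I$, the equality $e_2=e_1-e_0+\lambda(R/I)$ forces $\depth G(I)\ge d-1$, whence $x^{*}$ is regular on $G(I)$ and $\widetilde{B^{n}}=B^{n}$ for all $n$. This is false: Example~1.9 of the paper is a $3$-dimensional Cohen--Macaulay ring with $I=\mm$ integrally closed, $e_1-e_0+\lambda(R/I)=8-6+1=3=e_2$, and $\depth G(I)=1=d-2$. (Your parenthetical ``in fact $r=0$ \dots\ so the argument even gives $e_3(I)=0$'' is refuted by the same example, which has $e_3=-1$.) The Ozeki--Rossi structure theory gives only $\depth G(I)\ge d-2$ at the boundary, not $d-1$.

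The paper closes this gap in one line, and it is the idea you are missing: instead of trying to squeeze depth out of the numerical equality, invoke \cite[Lemma~11]{I} to choose the superficial element $x$ so that $B=I/(x)$ is itself \emph{integrally closed} in $A=R/(x)$. Then $\widetilde{B}=B$ is immediate, which is precisely what is needed for the identity
\[
e_1(B)-e_0(B)+\lambda(A/B)=\sum_{n\ge 1}\lambda\bigl(\widetilde{B^{n+1}}/C\widetilde{B^{n}}\bigr)
\]
(the $n=0$ term of the Huckaba--Marley sum is $\lambda(\widetilde{B}/C)$, and one needs $\widetilde{B}=B$ to match it with $e_0(B)-\lambda(A/B)=\lambda(B/C)$). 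From there your computation and the paper's coincide verbatim: the hypothesis gives $\sum_{n\ge 2}\lambda(\widetilde{B^{n+1}}/C\widetilde{B^{n}})=0$, hence $\widetilde{e_3(B)}=0$, and the final inequality $e_3(I)=b-r\le 0$ follows. So the fix is not more structure theory but simply a smarter choice of superficial element via Itoh's lemma.
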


\begin{proof}
  By \cite[Lemma 11]{I} there exists a superficial element $x$ of $I$ so that $I/(x)$ is an integrally closed ideal of $A=R/(x)$. Set $B=IA$ and $C=JA$. Then  $\dim A=2$, ${e_i}(I)={e_i}(B)$ for $i=0 , 1, 2$ and ${e_1}(B) - {e_0}(B) + \lambda(A/B)={e_2}(B)$. By \cite[Corollary 4.13]{HM} we have
 \[{e_1}(B) - {e_0}(B) + \lambda(A/B)=\widetilde {{e_1}(B)} - \widetilde {{e_0}(B)} + \lambda(A/B)=\sum\limits_{n \ge 1}^{} {\lambda ({{{\widetilde {B^{n + 1}}}} \mathord{\left/
 {\vphantom {{{B^{n + 1}}} {C  {{B^n}}}}} \right.
 \kern-\nulldelimiterspace} {C \widetilde {{B^n}}}})} \]
 and
 \[{e_2}(B) = \widetilde {{e_2}(B)}=\sum\limits_{n \ge 1}^{} {n \lambda ({{{\widetilde {B^{n + 1}}}} \mathord{\left/
 {\vphantom {{{B^{n + 1}}} {C  {{B^n}}}}} \right.
 \kern-\nulldelimiterspace} {C \widetilde {{B^n}}}})}. \]
Therefore
 \[\sum\limits_{n \ge 2}^{} {\lambda ({{{\widetilde {B^{n + 1}}}} \mathord{\left/
 {\vphantom {{{B^{n + 1}}} {C  {{B^n}}}}} \right.
 \kern-\nulldelimiterspace} {C \widetilde {{B^n}}}})}=0 \]
 and so
 \[\widetilde {{e_3}(B)} = \sum\limits_{n \ge 2}^{} {\left( {\begin{array}{*{20}{c}}
n\\
2
\end{array}} \right) \lambda ({{{\widetilde {B^{n + 1}}}} \mathord{\left/
 {\vphantom {{{B^{n + 1}}} {C  {{B^n}}}}} \right.
 \kern-\nulldelimiterspace} {C \widetilde {{B^n}}}})}=0. \]

  By using \cite[6.3]{TO} we have the following exact sequence
 \[0 \to {\mathcal{B} ^I}(x,R) \to {H^0}({L^I}(R))( - 1) \to {H^0}({L^I}(R)) \to {H^0}({L^I}(A)).\]
 Hence
 \[b:= \lambda(\mathcal{B} ^I(x,R)) \leq \lambda({H^0}({L^I}(A)))=:r. \]
 by using \cite[1.5]{TON} we get that ${e_3}(B)=\widetilde {{e_3}(B)} -r$ and so by \cite[Proposition 1.2]{RV} ${e_3}(I)= {e_3}(B)+b$.
Thus ${e_3}(I) \leq 0$.
\end{proof}

As an application of Theorem 1.8 we give the following example.
\begin{Example}\label{e}
Let $R=k[x,y, z,u, v, w]_{(x,y, z,u, v, w)}$, where $k$ is a field. Let $Q=(z^2,zu, zv, uv, u^3-yz, v^3-xz)$, $\frak{m}=(x,y, z,u, v, w)$ and $S=R/Q$. Then $S$ is a 3-dimensional
Cohen-Macaulay local ring and $\frak{m}$ is a maximal ideal (integrally closed  $\frak{m}$-primary ideal) of S. Then $\depth G(I)=1$  and we have the following Hilbert series
\[{P_I}(t) = \frac{{1+ 3t +3{t^3} -{t^4}}}{{{{(1 - t)}^3}}},\]

and Hilbert polynomial
\[{P_I}(n) = 6\left( {\begin{array}{*{20}{c}}
{n + 2}\\
3
\end{array}} \right)
-8\left( {\begin{array}{*{20}{c}}
{n + 1}\\
2
\end{array}} \right)
+3\left( {\begin{array}{*{20}{c}}
n \\
1
\end{array}} \right)
  +1.\]
Thus $e_1-e_0+ \lambda(R/I)=e_2$ and $e_3\leq 0$.
\end{Example}

\section{Examples}

Marley in \cite{M1} proved that if $(R,\frak{m})$ is a $d$-dimensional Cohen-Macaulay local ring and $I$ an $\frak{m}$-primary ideal such that $\depth G(I)\geq d-1$, then all Hilbert coefficients $e_0(I),e_1(I),...,e_d(I)$ are non-negative integers.

Thus it is natural to ask if
 $d-3\leq\depth G(I)=t\leq d-2$ whenever $d\geq 3$, then is $e_{i}(I)\geq 0$ for $i=t,t+1,...,d$? In the following examples we show that the question is negative.
\begin{Example}\label{e}
Let $R = k[x,y,z,u]_{(x,y,z,u)}$, where $k$ is a field, and \\ $I=(x^3, y^3, z^3, u^3, xy^2, yz^2, zu^2, xyz, xyu)$. Then we have $\depth G(I)=2$,
 Hilbert series
\[{P_I}(t) = \frac{{33 + 19t + 21{t^2} + 7{t^3} + 5{t^4} - 3{t^5} - {t^6}}}{{{{(1 - t)}^4}}}\]
and Hilbert polynomial
\[{P_I}(n) = 81\left( {\begin{array}{*{20}{c}}
{n + 3}\\
4
\end{array}} \right)
-81\left( {\begin{array}{*{20}{c}}
{n + 2}\\
3
\end{array}} \right)
+27\left( {\begin{array}{*{20}{c}}
{n + 1}\\
2
\end{array}} \right)
+23\left( {\begin{array}{*{20}{c}}
n \\
1
\end{array}} \right)
  -50.\]

Hence ${e_0}(I)=81$, ${e_1}(I)=81$, ${e_2}(I)=27$, ${e_3}(I)=-23$ and ${e_4}(I)=-50$.

\end{Example}
In the following example we show that if $(R,\frak{m})$ is a $5$-dimensional Cohen-Macaulay local ring and $I$ an $\frak{m}$-primary ideal and $\depth G(I)=3$, then $e_{i}(I) < 0$ for $i=3,4,5$.
\begin{Example}\label{e}
Let $R = k[x,y,z,u,v]_{(x,y,z,u,v)}$, where $k$ is a field and \\ $I=(x^3, y^3, z^3, u^2, v,  xy^2, yz^2, xyz, xyu)$. Then we have  $\depth G(I)=3$,
 Hilbert series
\[{P_I}(t) = \frac{{28 + 11t + 10{t^2} + 5{t^3} + {t^4} - {t^5} }}{{{{(1 - t)}^5}}}\]

 and Hilbert polynomial
\[{P_I}(n) = 54\left( {\begin{array}{*{20}{c}}
{n + 4}\\
5
\end{array}} \right)
-45\left( {\begin{array}{*{20}{c}}
{n + 3}\\
4
\end{array}} \right)
+21\left( {\begin{array}{*{20}{c}}
{n + 2}\\
3
\end{array}} \right)
+\left( {\begin{array}{*{20}{c}}
{n + 1}\\
2
\end{array}} \right)
-4\left( {\begin{array}{*{20}{c}}
n \\
1
\end{array}} \right)
  +1.\]
Therefore ${e_0}(I)=54$, ${e_1}(I)=45$, ${e_2}(I)=21$, ${e_3}(I)=-1$, ${e_4}(I)=-4$ and ${e_5}(I)=-1$.

\end{Example}
In the following example we show that if $(R,\frak{m})$ is a $4$-dimensional Cohen-Macaulay local ring and $I$ an $\frak{m}$-primary ideal, $\depth G(I)=2$ and
$e_{3}(I) > 0$, then  $e_{4}(I) < 0$.
\begin{Example}\label{e}
Let $R = k[x,y,z,u]_{(x,y,z,u)}$, where $k$ is a field and\\ $I=(x^4, y^4, z^4, u^4, x^2y^2, y^2z^2, z^2 u^2, xyz, xyu)$. Then we have  $\depth G(I)=2$,
Hilbert series
\[{P_I}(t) = \frac{{81 + 58t + 31{t^2} + 7{t^3} -{t^4} }}{{{{(1 - t)}^4}}}\]
and Hilbert polynomial
\[{P_I}(n) = 176\left( {\begin{array}{*{20}{c}}
{n + 3}\\
4
\end{array}} \right)
-137\left( {\begin{array}{*{20}{c}}
{n + 2}\\
3
\end{array}} \right)
+46\left( {\begin{array}{*{20}{c}}
{n + 1}\\
2
\end{array}} \right)
-3\left( {\begin{array}{*{20}{c}}
n \\
1
\end{array}} \right)
  -1.\]
So we have ${e_0}(I)=176$, ${e_1}(I)=137$, ${e_2}(I)=46$, ${e_3}(I)=3$ and ${e_4}(I)=-1$.
\end{Example}

In the following example we show that if $(R,\frak{m})$ is a $4$-dimensional Cohen-Macaulay local ring and $I$ an $\frak{m}$-primary ideal, $\depth G(I)=1$
then $e_{4}(I) > 0$ but  $e_{3}(I) < 0$.
\begin{Example}\label{e}
Let $R = k[x,y,z,u]_{(x,y,z,u)}$, where $k$ is a field and \\ $I=(x^3, y^3, z^3, u^3, xy^2, xz^2, x u^2, xyz, xyu)$. Then we have  $\depth G(I)=1$,
 Hilbert series
\[{P_I}(t) = \frac{{37 + 14t + 17{t^2} + 15{t^3} + 6{t^4} - 12{t^5} +4{t^6}}}{{{{(1 - t)}^4}}}\]
 and Hilbert polynomial
\[{P_I}(n) = 81 \left( {\begin{array}{*{20}{c}}
{n + 3}\\
4
\end{array}} \right)
-81\left( {\begin{array}{*{20}{c}}
{n + 2}\\
3
\end{array}} \right)
+38\left( {\begin{array}{*{20}{c}}
{n + 1}\\
2
\end{array}} \right)
+\left( {\begin{array}{*{20}{c}}
n \\
1
\end{array}} \right)
  +6.\]

Thus ${e_0}(I)=81$, ${e_1}(I)=81$, ${e_2}(I)=38$, ${e_3}(I)=-1$ and ${e_4}(I)=6$.

\end{Example}

In the following example $(R,\frak{m})$ is a $5$-dimensional Cohen-Macaulay local ring, $I$ an $\frak{m}$-primary ideal, $\depth G(I)=3$ and $e_{4}(I) < 0$ but $e_{5}(I) \geq 0$.
\begin{Example}\label{e}
Let $R = k[x,y,z,u,v]_{(x,y,z,u,v)}$, where $k$ is a field and\\ $I=(x^4, y^4, z^4, u^4, v,  x^2y^2, y^2z^2,z^2u^2, xyz, xyu)$. Then $\depth G(I)=3$,
Hilbert series
\[{P_I}(t) = \frac{{81 + 58 t + 31 {t^2} + 7 {t^3} - {t^4}  }}{{{{(1 - t)}^5}}}\]
 and Hilbert polynomial
\[{P_I}(n) = 176 \left( {\begin{array}{*{20}{c}}
{n + 4}\\
5
\end{array}} \right)
-137 \left( {\begin{array}{*{20}{c}}
{n + 3}\\
4
\end{array}} \right)
+46\left( {\begin{array}{*{20}{c}}
{n + 2}\\
3
\end{array}} \right)
-3 \left( {\begin{array}{*{20}{c}}
{n + 1}\\
2
\end{array}} \right)
+ \left( {\begin{array}{*{20}{c}}
n \\
1
\end{array}} \right)
  .\]

Thus ${e_0}(I)=176$, ${e_1}(I)=137$, ${e_2}(I)=46$, ${e_3}(I)=3$, ${e_4}(I)=-1$ and ${e_5}(I)=0$.

\end{Example}

In the following example we prove that if $(R,\frak{m})$ is a $d$-dimensional Cohen-Macaulay local ring and $I$ an $\frak{m}$-primary ideal, then all Hilbert coefficients are positive however $\depth G(I) \leq d-2$.
\begin{Example}\label{e}
Let $R = k[x,y,z]_{(x,y,z)}$, where $k$ is a field, and $I=(x^4, y^4, z^4, x^3y, y^3z, xyz)$. Then we have  $\depth G(I)=1$,
Hilbert series
\[{P_I}(t) = \frac{{31 + 9t + 7{t^2} + {t^3} }}{{{{(1 - t)}^3}}}\]

and Hilbert polynomial
\[{P_I}(n) = 48\left( {\begin{array}{*{20}{c}}
{n + 2}\\
3
\end{array}} \right)
-26\left( {\begin{array}{*{20}{c}}
{n + 1}\\
2
\end{array}} \right)
+10\left( {\begin{array}{*{20}{c}}
n \\
1
\end{array}} \right)
  -1.\]

Thus all Hilbert coefficients are positive.

\end{Example}

In the following example we show that $\depth G(I)=0$ but all Hilbert coefficients are non negative.
\begin{Example}\label{e}
Let $R = k[x,y,z]_{(x,y,z)}$, where $k$ is a field and $I=(x^4, y^4, z^4,x^3y,xy^3,y^3z,yz^3)$. Then $\depth G(I)=0$,
Hilbert series
\[{P_I}(t) = \frac{{30 + 12t + 22{t^2} + 8{t^3} -2{t^4} - 12{t^5} +6 {t^6}}}{{{{(1 - t)}^3}}}\]
 and Hilbert polynomial
\[{P_I}(n) = 64\left( {\begin{array}{*{20}{c}}
{n + 2}\\
3
\end{array}} \right)
-48\left( {\begin{array}{*{20}{c}}
{n + 1}\\
2
\end{array}} \right)
+4\left( {\begin{array}{*{20}{c}}
n \\
1
\end{array}} \right)
  .\]

Hence all Hilbert coefficients are no negative.

\end{Example}

\subsection*{Acknowledgements}
We would like to thank the referee for a careful reading of the manuscript and for providing
helpful suggestions.


\end{document}